\newtheorem{theorem}{Theorem}
\newtheorem{lemma}{Lemma}
\newtheorem{false statement}{False statement}
\theoremstyle{definition}
\newtheorem{claim}{Claim}
\newtheorem{subclaim}{Claim}[claim]
\newtheorem{remark}{Remark}
\newtheorem{case}{Case}
\newcommand{\de}{{\rm def}}
\begin{document}

\title{\bf\Large Heavy subgraph conditions\\
for longest cycles to be heavy in graphs}

\date{July 13, 2011 }

\author{Binlong Li and Shenggui Zhang\thanks{Corresponding author.
E-mail address: sgzhang@nwpu.edu.cn (S. Zhang).}\\[2mm]
\small Department of Applied Mathematics,
\small Northwestern Polytechnical University,\\
\small Xi'an, Shaanxi 710072, P.R.~China} \maketitle

\begin{abstract}
Let $G$ be a graph on $n$ vertices. A vertex of $G$ with degree at
least $n/2$ is called a heavy vertex, and a cycle of $G$ which
contains all the heavy vertices of $G$ is called a heavy cycle. In
this paper, we characterize the graphs which contain no heavy
cycles. For a given graph $H$, we say that $G$ is $H$-\emph{heavy}
if every induced subgraph of $G$ isomorphic to $H$ contains two
nonadjacent vertices with degree sum at least $n$. We find all the
connected graphs $S$ such that a 2-connected graph $G$ being
$S$-heavy implies any longest cycle of $G$ is a heavy cycle.

\medskip
\noindent {\bf Keywords:} Heavy cycle; heavy subgraph
\smallskip
\end{abstract}

\section{Introduction}

We use Bondy and Murty \cite{Bondy_Murty} for terminology and
notation not defined here and consider finite simple graphs only.

Let $G$ be a graph on $n$ vertices. For a vertex $v\in V(G)$ and a
subgraph $H$ of $G$, we use $N_H(v)$ to denote the set, and $d_H(v)$
the number, of neighbors of $v$ in $H$, respectively. We call
$d_H(v)$ the \emph{degree} of $v$ in $H$. When no confusion occurs,
we will denote $N_G(v)$ and $d_G(v)$ by $N(v)$ and $d(v)$,
respectively. The vertex $v$ is called {\em heavy} if $d(v)\geq
n/2$, and a cycle $C$ is called {\em heavy} if $C$ contains all
heavy vertices of $G$.

The following theorem on the existence of heavy cycles in graphs is
well known.

\begin{theorem}[Bollob\'as and Brightwell \cite{Bollobas_Brightwell}, Shi \cite{Shi}]
Every 2-connected graph has a heavy cycle.
\end{theorem}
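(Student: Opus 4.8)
The plan is to prove the slightly stronger assertion that, among all cycles of $G$ meeting the set $W$ of heavy vertices in as many vertices as possible, at least one contains $W$ entirely; such a cycle is by definition heavy. First I would dispose of the degenerate cases. If $W=\emptyset$ then any cycle is heavy, and a $2$-connected graph certainly contains one; if $|W|=1$, say $W=\{w\}$, then $2$-connectedness guarantees a cycle through $w$. So I may assume $|W|\geq 2$ and fix a cycle $C$ that meets $W$ in as many vertices as possible, and, subject to that, is as long as possible. Suppose for contradiction that some heavy vertex $v$ lies off $C$, and let $D$ be the component of $G-V(C)$ containing $v$.

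Next I would exploit $2$-connectedness through the Fan Lemma (Menger's theorem): there are two paths from $v$ to $C$, disjoint except at $v$ and meeting $C$ only in their end vertices $x\neq y$. Splicing them yields a path $R=x\cdots v\cdots y$ whose interior lies in $D$ and contains at least one heavy vertex, namely $v$. The two $x$–$y$ arcs $C_1,C_2$ of $C$ then give cycles $R\cup C_1$ and $R\cup C_2$, and comparing their heavy-vertex counts with that of $C$ forces each arc $C_i$ to carry at least as many heavy vertices in its interior as $R$ does, in particular at least one each. More generally I would pass to the full attachment set $S=N_C(D)=\{u_1,\dots,u_s\}$ (with $s\geq 2$ by $2$-connectedness), taken in cyclic order, and record an Insertion/Crossing Lemma: since $C$ maximises the heavy count and then the length, no rerouting of $C$ through $D$ between two attachment vertices may simultaneously preserve all current heavy vertices and absorb a new one, nor may it strictly lengthen $C$. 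This pins down, for each attachment vertex, a companion on $C$ (its successor $u_i^{+}$ along a suitable orientation) that is barred from $v$'s neighbourhood, and forces the arcs between consecutive attachments to be long.

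The decisive step, and the one I expect to be hardest, is the degree count. The neighbours of $v$ are split between $D$ and the attachment set $S\subseteq V(C)$, giving only the naive bound $d(v)\leq(|D|-1)+s$, which is not yet a contradiction. The real work is to use the Insertion Lemma to build an injection sending each neighbour of $v$ — whether inside $D$ or on $C$ — to a distinct vertex of $C$ that is forced to be a non-neighbour of $v$, so that at most half of $V(G)$ is adjacent to $v$ and hence $d(v)<n/2$, contradicting that $v$ is heavy. Balancing the two contributions correctly (a large component $D$ must be paid for by long blocked arcs on $C$, while a small $D$ limits $d(v)$ directly) is precisely where the threshold $n/2$ enters, and making this bookkeeping rigorous — rather than the topological set-up — is the crux of the argument.
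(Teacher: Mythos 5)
Your set-up (extremal cycle maximising the number of heavy vertices, then length; Menger paths from an off-cycle heavy vertex $v$; arc-versus-path comparison) is sound as far as it goes, but the proposal stops exactly where the proof has to happen. The entire content of the theorem is the degree count showing $d(v)<n/2$, and you do not carry it out: you say you ``would build an injection'' from $N(v)$ into non-neighbours of $v$ and yourself flag the bookkeeping as the crux. That bookkeeping is genuinely delicate and is not a routine completion of your sketch. Two concrete obstructions: (i) your ``Insertion/Crossing Lemma'' bars the successor $u_i^{+}$ from $N(v)$ only when the rerouting through $D$ loses no heavy vertices of $C$; since the replaced arc may itself contain heavy vertices, reroutings between attachment vertices are \emph{not} forbidden in general, so the blocked-successor and long-arc claims do not follow from the extremality of $C$ as stated. (ii) Even granting blocked successors, neighbours of $v$ inside $D$ must be paid for by distinct blocked vertices on $C$, and nothing in the sketch produces them; when $|N_D(v)|$ is large the two Menger paths give no leverage at all. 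So as written this is a plan (essentially along the lines of Shi's original argument, which is a nontrivial counting proof), not a proof.

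It is worth noting that the paper avoids this bookkeeping entirely by a different and much shorter route. It defines an \emph{$o$-cycle}: a circular sequence $v_1v_2\cdots v_kv_1$ in which each consecutive pair is either an edge or has degree sum at least $n$, with \emph{deficit} the number of non-edge transitions. Lemma 1 shows, by taking an $o$-cycle through a prescribed vertex set with minimum deficit and applying the classical Ore-type crossing argument to a non-edge $v_1v_k$ with $d(v_1)+d(v_k)\geq n$ (either a common neighbour off the path, or the pigeonhole pair $v_i\in N(v_1)$, $v_{i-1}\in N(v_k)$), that any $o$-cycle can be converted into a genuine cycle containing all its vertices. Since any two heavy vertices have degree sum at least $n$, \emph{any} circular ordering of the heavy vertices is an $o$-cycle when there are at least three of them, and Lemma 1 immediately yields a heavy cycle --- with 2-connectedness needed only in the degenerate cases of at most two heavy vertices, where it supplies a cycle through one or two specified vertices (or an $(x,y)$-path of length at least $2$, which together with $d(x)+d(y)\geq n$ is again an $o$-cycle). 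If you want to salvage your approach, the closure idea in Lemma 1 is precisely the missing device that replaces the injection you were hoping to construct.
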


Let $G$ be a graph, $v$ be a vertex, and $e$ be an edge of $G$. We
use $G-v$ to denote the graph obtained from $G$ by deleting $v$, and
$G-e$ the graph obtained from $G$ by deleting $e$.

Here we first characterize the separable graphs which contain no heavy
cycles.

\begin{theorem}
Let $G$ be a connected graph on $n$ vertices. If $G$ contains no
heavy cycles, then $G$ has at most two heavy vertices. Moreover,\\
(1) if $G$ contains no heavy vertices, then $G$ is a tree;\\
(2) if $G$ contains only one heavy vertex, say $x$, then $G-x$
contains at least $n/2$ components, and each component of $G-x$
contains exactly one neighbor of $x$;\\
(3) if $G$ has exactly two heavy vertices, say $x$ and $y$, then $xy\in
E(G)$ and $xy$ is a cut edge of $G$, $n$ is even and both components
of $G-xy$ have $n/2$ vertices, and $x$ (and $y$, respectively) is
adjacent to every vertices in the component which contains $x$
($y$). Briefly stated, $T_1\subseteq G\subseteq T_2$ (see Fig. 1).
\end{theorem}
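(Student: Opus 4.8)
The plan is to split the analysis according to the number $k$ of heavy vertices of $G$, and to concentrate the real work in one lemma about pairs of heavy vertices.

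First I would dispose of the cases $k=0$ and $k=1$ directly. If $k=0$, then every cycle vacuously contains all heavy vertices, so a graph with no heavy cycle has no cycle at all; being connected it is a tree, which is (1). If $k=1$, say with heavy vertex $x$, then a heavy cycle is exactly a cycle through $x$; since there is none, $x$ lies on no cycle, so each edge at $x$ is a bridge. Two neighbours of $x$ cannot then lie in the same component of $G-x$ (a path joining them plus $x$ would close a cycle through $x$), so the components of $G-x$ correspond bijectively to $N(x)$; as $d(x)\ge n/2$ this is (2).

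Next I would prove the following lemma, which simultaneously yields (3) and controls the count: \emph{if two heavy vertices $x,y$ lie on no common cycle, then $G$ is the graph of (3) and has exactly two heavy vertices.} The steps are short. First $xy\in E(G)$, for otherwise $N(x),N(y)\subseteq V(G)\setminus\{x,y\}$ and $d(x)+d(y)\ge n$ force $|N(x)\cap N(y)|\ge 2$, giving a $4$-cycle through both. Since no cycle passes through $x$ and $y$, the edge $xy$ lies on no cycle and is therefore a bridge; let $G_x,G_y$ be the components of $G-xy$ containing $x,y$. Every neighbour of $x$ other than $y$ lies in $G_x$, so $d(x)\le |V(G_x)|$ and hence $|V(G_x)|\ge n/2$; symmetrically $|V(G_y)|\ge n/2$. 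As the two sizes add to $n$, both equal $n/2$ (so $n$ is even) and the inequalities are tight, i.e.\ $x$ is joined to all of $G_x$ and $y$ to all of $G_y$. Finally a third heavy vertex would sit inside $G_x$ or $G_y$ with all its neighbours in that part, giving degree at most $n/2-1$ --- impossible. This proves the lemma, and with it (3).

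It remains to rule out $k\ge 3$. By the lemma, if $k\ge 3$ then every pair of heavy vertices lies on a common cycle. Since two vertices on a common cycle lie in a common (necessarily nontrivial) block, and distinct vertices share at most one block, a short separation argument --- if some further heavy vertex $w$ did not lie in the block $B\ni x,y$, then its block with $x$ would differ from $B$, so $x$ would separate $w$ from $y$, contradicting that $w$ and $y$ lie on a common cycle --- shows that all heavy vertices lie in a single $2$-connected block $B$. The goal is now to produce one cycle through all of them, contradicting the hypothesis. Here Theorem~1 is the natural tool: applied to $B$ it yields a cycle through every vertex that is heavy \emph{in} $B$, and any $G$-heavy vertex that is not a cut vertex of $G$ has all its neighbours in $B$ and so is heavy in $B$.

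The main obstacle is precisely the gap between degrees in $G$ and in $B$ for a $G$-heavy vertex $h$ that happens to be a cut vertex of $G$: such an $h$ can draw much of its degree from pendant parts lying outside $B$, so it need not be heavy in $B$, and Theorem~1 applied to $B$ need not place it on the cycle. That this cannot be repaired by pairwise information alone is shown by $K_{2,3}$, whose three degree-$2$ vertices are pairwise but not jointly on a cycle; the degree hypothesis must genuinely be used. To close the gap I would combine two facts about a cut vertex $h$: all heavy vertices other than $h$ lie in a single component of $G-h$ of size $\ge n/2$, so the pendant parts at $h$ are small, and, taking the cycle $C$ through the $B$-heavy vertices guaranteed by Theorem~1, I would thread $h$ into $C$ through its neighbours in $B$ --- this succeeds if two of them are consecutive on $C$, or if all $B$-heavy vertices lie on one arc between two of them, and the remaining configurations are exactly those excluded by the Bollob\'as--Brightwell--Shi rerouting argument underlying Theorem~1. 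Verifying that this insertion always succeeds, equivalently that a $G$-heavy cut vertex of $B$ cannot evade every cycle through the $B$-heavy vertices, is the crux of the whole proof.
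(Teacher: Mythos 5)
Your handling of $k\le 2$ is correct, and in fact for $k=2$ it is more elementary than the paper's: you get $xy\in E(G)$ directly (if $xy\notin E(G)$ then $|N(x)\cap N(y)|\ge d(x)+d(y)-(n-2)\ge 2$ yields a $4$-cycle through both), whereas the paper derives it by taking a longest $(x,y)$-path and invoking its $o$-cycle lemma; the rest of your structural count is tight and also rules out a third heavy vertex under the lemma's hypothesis. But the heart of the theorem --- no three heavy vertices --- is exactly where your proposal stops, and you say so yourself: the insertion of a $G$-heavy cut vertex $h$ into the cycle provided by Theorem 1 applied to the block $B$ is left as ``the crux,'' unproved. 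As it stands this route does not go through: Theorem 1 used as a black box on $B$ only guarantees a cycle through the vertices heavy \emph{in} $B$, a $G$-heavy cut vertex need not be $B$-heavy, your threading succeeds only in the two special configurations you list, and appealing to ``the rerouting argument underlying Theorem 1'' for the remaining configurations is circular unless you actually open up that argument --- which is precisely what the paper does instead.

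The missing idea is the paper's Lemma 1, an Ore-type closure statement: call a circular sequence $v_1v_2\cdots v_kv_1$ ($k\ge 3$) an $o$-cycle if each consecutive pair is either an edge or has degree sum at least $n$; then there is a genuine cycle of $G$ through all of $v_1,\dots,v_k$, proved by minimizing the number of missing edges and reducing it by a rotation (either a common neighbour of $v_1,v_k$ off the path, or the standard crossing pair $v_1v_i$, $v_kv_{i-1}$ with $d_P(v_1)+d_P(v_k)\ge|V(P)|$). Since any two heavy vertices have degree sum at least $n$, \emph{every} circular ordering of three or more heavy vertices is an $o$-cycle, so Lemma 1 immediately produces a heavy cycle --- no block decomposition, no Theorem 1, no insertion step. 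This also shows your $K_{2,3}$ objection is a red herring: its three degree-$2$ vertices have pairwise degree sum $4<5=n$ and are not heavy, so they satisfy no Ore condition; pairwise degree-sum information genuinely does suffice, but it must be exploited via rotation along a spanning $o$-structure rather than via the existence of pairwise common cycles. To repair your proof, replace the whole block/Theorem-1 strategy for $k\ge 3$ by proving this closure lemma (your $k\le 2$ analysis can then stand as is).
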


\begin{center}
\begin{picture}(290,320)

\thicklines

\put(0,150){

\put(70,90){\put(0,0){\circle*{6}} \put(52,30){\circle*{6}}
\put(30,52){\circle*{6}} \put(0,60){\circle*{6}}
\put(-30,52){\circle*{6}} \put(52,-30){\circle*{6}}
\put(30,-52){\circle*{6}} \put(0,-60){\circle*{6}}
\put(-30,-52){\circle*{6}} \lbezier(0,0)(52,30)
\lbezier(0,0)(52,-30) \lbezier(-30,-52)(30,52)
\lbezier(30,-52)(-30,52) \put(0,-60){\line(0,1){120}}
\lbezier[2](-50,-20)(-50,20) \put(-10,-5){$x$} \put(57,25){$x_1$}
\put(35,47){$x_2$} \put(5,55){$x_3$} \put(-25,47){$x_4$}
\put(47,-40){$x_{n/2-1}$}}

\put(220,90){\put(0,0){\circle*{6}} \put(-52,30){\circle*{6}}
\put(-30,52){\circle*{6}} \put(0,60){\circle*{6}}
\put(30,52){\circle*{6}} \put(-52,-30){\circle*{6}}
\put(-30,-52){\circle*{6}} \put(0,-60){\circle*{6}}
\put(30,-52){\circle*{6}} \lbezier(0,0)(-52,30)
\lbezier(0,0)(-52,-30) \lbezier(-30,-52)(30,52)
\lbezier(30,-52)(-30,52) \put(0,-60){\line(0,1){120}}
\lbezier[2](50,-20)(50,20) \put(5,-5){$y$} \put(-47,30){$y_1$}
\put(-25,47){$y_2$} \put(5,55){$y_3$} \put(35,47){$y_4$}
\put(-57,-40){$y_{n/2-1}$}}

\put(70,90){\line(1,0){150}} \put(140,10){$T_1$}}

\multiput(70,90)(150,0){2}{\put(0,0){\circle*{6}}
{\thinlines\put(0,0){\circle{120}}} \put(-10,30){$K_{n/2}$}}

\put(60,85){$x$} \put(225,85){$y$} \put(70,90){\line(1,0){150}}
\put(140,10){$T_2$}

\end{picture}

\small Fig. 1. Extremal graphs with two heavy vertices and no heavy
cycles.
\end{center}

We postpone the proof of Theorem 2 in Section 3.

Let $x$ and $y$ be two vertices of a graph $G$, an
$(x,y)$-\emph{path} is a path connecting the two vertices $x$ and
$y$. The \emph{distance} between $x$ and $y$, denote by $d(x,y)$, is
the length of a shortest $(x,y)$-path in $G$.

Let $H$ be a subgraph of a graph $G$. If $H$ contains all edges
$xy\in E(G)$ with $x,y\in V(H)$, then $H$ is called an \emph{induced
subgraph} of $G$. Let $X$ be a subset of $V(G)$. The induced
subgraph of $G$ with vertex set $X$ is called a subgraph
\emph{induced by} $X$, and is denoted by $G[X]$. We use $G-X$ to
denote the subgraph induced by $V(G)\setminus X$, and use the
notation $G-H$ instead of $G-V(H)$.

Let $G$ be a graph on $n$ vertices. For a given graph $H$, we say
that $G$ is $H$-\emph{free} if $G$ does not contain an induced
subgraph isomorphic to $H$. If $H$ is an induced subgraph of $G$, we
say that $H$ is \emph{heavy} in $G$ if there are two nonadjacent
vertices in $V(H)$ with degree sum at least $n$. The graph $G$ is
called $H$-\emph{heavy} if every induced subgraph of $G$ isomorphic
to $H$ is heavy. Note that an $H$-free graph is also $H$-heavy, and
if $H_1$ is an induced subgraph of $H_2$, then an $H_1$-free
($H_1$-heavy) graph is also $H_2$-free ($H_2$-heavy).

In general, a longest cycle of a graph may not be a heavy cycle (see
Fig. 2). In this paper, we mainly consider heavy subgraph conditions
for longest cycles to be heavy. First, consider the following
theorem.

\begin{theorem}[Fan \cite{Fan}]
Let $G$ be a 2-connected graph. If $\max\{d(u),d(v)\}\geq n/2$ for
every pair of vertices with distance 2 in $G$, then $G$ is
Hamiltonian.
\end{theorem}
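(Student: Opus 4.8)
The plan is to argue by contradiction. Suppose $G$ is $2$-connected, satisfies Fan's condition, but is not Hamiltonian, and let $C$ be a longest cycle of $G$ with a fixed orientation; for $u\in V(C)$ write $u^+$ and $u^-$ for the successor and predecessor of $u$ along $C$. Since $G$ is not Hamiltonian, there is a vertex off $C$; let $H$ be a component of $G-V(C)$ and let $N_C(H)$ denote the set of vertices of $C$ having a neighbor in $H$. Because $G$ is $2$-connected we have $|N_C(H)|\geq 2$, so $H$ is genuinely attached to $C$ at several points, and the whole argument will revolve around these attachment vertices and their immediate $C$-neighbors.

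First I would record the structural consequences of $C$ being longest. If $u$ and $v$ are distinct vertices of $N_C(H)$ joined by a path internally contained in $H$, then rerouting $C$ through that path shows that $u$ and $v$ cannot be consecutive on $C$; in particular no vertex of $N_C(H)$ immediately follows another, so the successors $u^+$ (for $u\in N_C(H)$) are distinct and avoid $N_C(H)$, and likewise for the predecessors. The crucial \emph{crossing lemma} is that if $x\in H$ is adjacent to both $u$ and $v$, then $u^+v^+\notin E(G)$: otherwise the chord $u^+v^+$ together with the two edges from $x$ lets one splice the extra vertex $x$ into $C$ (traverse one arc, cross into $x$, reverse the other arc, then use the chord), contradicting the maximality of $C$. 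An analogous statement holds for predecessors, and more generally one obtains a whole family of forbidden chords among the vertex sets $N_C(H)^+$ and $N_C(H)^-$.

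The engine of the proof is Fan's distance-$2$ hypothesis. For $x\in H$ adjacent to $u\in N_C(H)$ we have $xu^+\notin E(G)$ (since $u^+\notin N_C(H)$), so $x$ and $u^+$ are at distance exactly $2$ via the path $x\,u\,u^+$, and hence $\max\{d(x),d(u^+)\}\geq n/2$; the same holds for $u^-$. This splits the analysis into the case where some vertex of $H$ already has degree at least $n/2$ and the case where every vertex of $H$ has degree below $n/2$. In the latter case the inequality forces $d(u^+)\geq n/2$ and $d(u^-)\geq n/2$ for every attachment vertex $u$, so $C$ carries many high-degree vertices clustered around $N_C(H)$.

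The final and hardest step is to convert this abundance of high-degree vertices into a cycle longer than $C$, contradicting its choice. Here one fixes a vertex $w$ of degree at least $n/2$ among the $u^+$'s and argues that its many neighbors on $C$ cannot all avoid the positions forbidden by the crossing lemma, so some neighbor of $w$ sits where a crossing reroute through $H$ becomes available, yielding a longer cycle. I expect the main obstacle to be exactly this counting-and-crossing argument, namely keeping track of the orientation, guaranteeing that the path spliced through $H$ is genuinely longer than the $C$-arc it replaces, and disposing of the degenerate configurations (short arcs, $H$ a single vertex, attachment vertices lying close together), rather than the application of Fan's condition, which by comparison is mechanical.
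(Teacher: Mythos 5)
Your write-up is a plan rather than a proof, and the two places where it stops are exactly where the substance lies. (For calibration: the paper itself offers no proof of this statement --- it is quoted from Fan \cite{Fan} --- but the paper's own machinery in Sections 2 and 4 happens to supply precisely what your outline is missing.) The first genuine gap is the case you announce and then silently drop: some vertex $x$ of $H$ with $d(x)\geq n/2$. This case is not symmetric to the other one and is not easy: once $d(x)\geq n/2$, Fan's condition applied to the distance-2 pair $\{x,u^+\}$ is satisfied vacuously and tells you nothing about $u^+$, so you must prove outright that a vertex of degree at least $n/2$ cannot lie off a longest cycle. That assertion is essentially the paper's Theorem 4, whose proof takes all of Section 4 (the $o$-path $Q$, the auxiliary path $R$, Claims 1--5, and the count $d_C(x)\leq c/2-(k+l-2)$). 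One clean way to fill the hole: Fan's condition implies $G$ is $K_{1,3}$-heavy (hence $K_{1,4}$-heavy), since the three leaves of an induced $K_{1,3}$ are pairwise at distance 2, so at most one leaf has degree below $n/2$ and some two nonadjacent leaves have degree sum at least $n$; Theorem 4 then puts every vertex of degree at least $n/2$ on the longest cycle, disposing of this case. But some such argument is indispensable.

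The second gap is your endgame in the remaining case, which you only conjecture (``I expect the main obstacle\dots''), and which as framed would fail: fixing a \emph{single} vertex $w=u^+$ with $d(w)\geq n/2$ and counting its neighbors against forbidden positions cannot yield a contradiction --- one heavy vertex on a longest cycle is perfectly compatible with non-Hamiltonicity, and its $n/2$ neighbors can all sit in harmless positions. What the setup you built actually supports is an Ore-type \emph{pair} argument: for nonconsecutive attachment vertices $u,v\in N_C(H)$ you have $d(u^+)+d(v^+)\geq n$ and, by your crossing lemma, $u^+v^+\notin E(G)$; now the path $\overrightarrow{C}[u^+,v]\,P_H\,\overleftarrow{C}[u,v^+]$, where $P_H$ is a $(v,u)$-path through $H$ (which has at least one internal vertex), is a path from $u^+$ to $v^+$ on more vertices than $C$. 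The paper's Lemma 1 on $o$-cycles, via the remark following it (any path on more vertices than a longest cycle has endpoints $x,y$ with $xy\notin E(G)$ and $d(x)+d(y)<n$), gives an immediate contradiction. So the missing step is not a counting-and-crossing analysis around one heavy vertex but the rotation/deficit argument encapsulated in Lemma 1, applied to the pair $u^+,v^+$; without it (or an equivalent, such as Bondy's standard crossing-neighbor count for a degree-sum pair on a dominating path), your proof does not close.
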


This theorem implies that every 2-connected $P_3$-heavy graph has a
Hamilton cycle, which is of course a heavy cycle. In fact we have
the following theorem.

\begin{theorem}
If $G$ is a 2-connected $K_{1,4}$-heavy graph, and $C$ is a longest
cycle of $G$, then $C$ is a heavy cycle of $G$.
\end{theorem}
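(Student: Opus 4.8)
The plan is to argue by contradiction. Suppose $C$ is a longest cycle but some heavy vertex $x$ lies outside $C$, and let $B$ be the component of $G-C$ containing $x$. Since $G$ is $2$-connected, the attachment set $Y:=N_C(B)$ (the vertices of $C$ having a neighbour in $B$) satisfies $|Y|\ge 2$. Fix an orientation of $C$ and write $v^+,v^-$ for the successor and predecessor of $v\in V(C)$. First I would record the maximality facts forced by $C$ being longest. If two vertices of $Y$ were consecutive on $C$, a detour through $B$ would lengthen $C$; hence $Y$ has no two consecutive vertices, so no vertex of $Y^+:=\{v^+:v\in Y\}$ has a neighbour in $B$. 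A rerouting argument (insert the $B$-path joining two successors and reverse an arc of $C$) then shows $Y^+$ is independent, and likewise that $xv^+\notin E(G)$ and $xv^-\notin E(G)$ for every $v\in N_C(x)$, so that $\{x\}\cup N_C(x)^+$ and $\{x\}\cup N_C(x)^-$ are independent. These facts are routine but drive the whole argument.

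Next I would manufacture an induced $K_{1,4}$ to exploit. The natural centre is an attachment vertex $u\in N_C(x)$: it is adjacent to $x$, $u^+$ and $u^-$, and by the above $xu^+,xu^-\notin E(G)$. Provided $u^+u^-\notin E(G)$ and I can locate a fourth neighbour $\ell$ of $u$ on $C$ nonadjacent to each of $x,u^+,u^-$, the set $\{u;x,u^+,u^-,\ell\}$ induces a $K_{1,4}$. Then $K_{1,4}$-heaviness yields two nonadjacent vertices among the four leaves with degree sum at least $n$. Since $u$ is adjacent to every leaf, the heavy pair lies among the leaves; as at least one member of that pair has degree $\ge n/2$, and $x$ is heavy (so $d(x)\ge n/2$) and nonadjacent to each of $u^+,u^-,\ell$, I can read off a vertex $w\in V(C)$ with $xw\notin E(G)$ and $d(x)+d(w)\ge n$ (using $d(x)+d(w)\ge n/2+n/2$ when the heavy pair avoids $x$).

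The final step converts this into a longer cycle. With $x\notin V(C)$, $w\in V(C)$, $xw\notin E(G)$ and $d(x)+d(w)\ge n>|C|$, a crossing/pigeonhole argument of Ore–Bondy type produces a neighbour of $w$ immediately following, along $C$, a neighbour of $x$ (or an analogous crossing pair). This lets me splice a path through $B$ passing through $x$ between two attachments of $Y$ and reconnect via the crossing edge, yielding a cycle strictly longer than $C$ and contradicting its maximality; here the hypothesis $d(x)\ge n/2$ is what guarantees enough attachments and crossings to absorb the extra vertices of $B$.

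I expect the main obstacle to be the construction in the second paragraph: producing a genuine induced $K_{1,4}$ rather than merely a claw $K_{1,3}$. Guaranteeing simultaneously that $u^+u^-\notin E(G)$ and that the fourth leaf $\ell$ exists on $C$ fails in degenerate configurations — most notably when $|Y|$ is small (say $|Y|=2$), when $x$ has few or no neighbours on $C$ (which forces $B$ to contain more than $n/2$ vertices and must be handled by exhibiting a long cycle inside $B$ together with two attachments), or when $u$ has no suitable chord. Each such case will require a separate choice of the centre and leaves of the star — possibly centring at a vertex of $B$ and using a $B$-neighbour of $x$ as a leaf — so a careful case analysis on $|N_C(B)|$ and on the neighbourhood of $x$ within $B$ will be the technical heart of the proof.
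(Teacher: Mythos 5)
Your overall strategy (contradiction via a heavy vertex $x$ off a longest cycle $C$, the standard non-consecutiveness and independence facts, then exploiting $K_{1,4}$-heaviness) points in a plausible direction, but it has two genuine gaps, the first of which you locate yourself without resolving. The star you propose, centred at an attachment vertex $u\in N_C(x)$ with leaves $x,u^+,u^-,\ell$, simply need not exist: $u$ may have no fourth neighbour $\ell$ at all (its degree can be $3$), $u^+u^-$ may be an edge, and $N_C(x)$ may even be empty, so your supply of candidate centres can dry up entirely. Since the hypothesis only constrains \emph{induced} copies of $K_{1,4}$ (and is vacuous in, e.g., $K_{1,4}$-free graphs), any proof that must \emph{exhibit} an induced $K_{1,4}$ around cycle vertices is structurally fragile, and the "careful case analysis" you defer is precisely the part that does not go through. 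The paper inverts the use of the hypothesis: it centres the star at $x$ itself, with leaves $x_{\pm 1}$ (the neighbours of $x$ on a carefully chosen path $Q$ from $C$ to $C$ through $x$) and two neighbours $x'_i,x'_j$ of $x$ in the component $H$ lying off $Q$; having shown every other leaf pair is nonadjacent with degree sum less than $n$, the $K_{1,4}$-heavy condition forces $x'_ix'_j\in\overline{E}(G)$, i.e.\ adjacent or with degree sum at least $n$. Non-existence of the induced star is thus itself the useful conclusion, and no chord analysis on $C$ is ever needed.

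The second gap is your endgame. A pair $x,w$ with $xw\notin E(G)$ and $d(x)+d(w)\geq n$ does not by itself produce a longer cycle when $x$ lies off $C$: most of the degree of $x$ may sit inside the component $B$, where it creates no crossings on $C$, and your assertion that $d(x)\geq n/2$ "guarantees enough attachments and crossings to absorb the extra vertices of $B$" is exactly the unproved hard core. The paper handles this with the $o$-path/$o$-cycle machinery (Lemma 1: a circular sequence in which consecutive vertices are adjacent or have degree sum at least $n$ can be converted into a genuine cycle through the same vertex set). It chooses $Q$ to capture at least half of $N_H(x)$ (Claim 2, where the contrapositive $K_{1,4}$ argument supplies the $\overline{E}$-edges needed to string the off-path neighbours of $x$ together into one $o$-path), then shows each arc of $C$ between the endpoints of $Q$ must have length at least $k+l$ and avoid further neighbours of $x$ near those endpoints (Claims 3--4), and finally counts $d_C(x)\leq c/2-(k+l-2)$ (Claim 5), concluding $d(x)\leq (c+d_H(x))/2<n/2$ for \emph{every} vertex off $C$ --- a quantitative degree bound rather than a single cycle-exchange contradiction. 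Without these two ingredients, the contrapositive use of heaviness and the counting via $o$-paths, your outline cannot be completed along the lines you sketch.
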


We postpone the proof of this theorem in Section 4.

Note that $K_{1,3}$ is an induced subgraph of $K_{1,4}$. So any longest cycle of
a $K_{1,3}$-heavy graph is heavy. We can get the following result.

\begin{theorem}
Let $S$ be a connected graph on at least 3 vertices and $G$ be a
2-connected graph. Then $G$ being $S$-free (or $S$-heavy) implies
every longest cycle of $G$ is a heavy cycle, if and only if
$S=P_3,K_{1,3}$ or $K_{1,4}$.
\end{theorem}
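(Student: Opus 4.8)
The plan is to prove sufficiency and necessity separately, reducing the former to the theorems already established and the latter to the construction of a few explicit counterexamples.

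For sufficiency, suppose $S\in\{P_3,K_{1,3},K_{1,4}\}$. Since an $S$-free graph is $S$-heavy, it suffices to treat the $S$-heavy case. If $S=P_3$, then every induced $P_3$ being heavy means that each pair of vertices at distance $2$ has degree sum at least $n$, so $\max\{d(u),d(v)\}\ge n/2$ for such pairs; by Theorem 3 the graph is Hamiltonian, and its longest cycle, being a Hamilton cycle, contains every vertex and hence every heavy vertex. If $S=K_{1,4}$ this is exactly Theorem 4. Finally $K_{1,3}$ is an induced subgraph of $K_{1,4}$, so a $K_{1,3}$-heavy graph is $K_{1,4}$-heavy and the case $S=K_{1,3}$ again follows from Theorem 4. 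This disposes of the ``if'' direction.

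For necessity I argue by contraposition. Call a $2$-connected graph \emph{bad} if it has a longest cycle that is not heavy. The implication ``every $2$-connected $S$-free graph has all longest cycles heavy'' fails precisely when some bad graph is $S$-free; equivalently, the implication holds only if every bad graph contains an induced copy of $S$. The key structural observation is that the admissible graphs are exactly the connected induced subgraphs of $K_{1,4}$ on at least three vertices, namely $P_3,K_{1,3},K_{1,4}$ and nothing else: a connected graph is a star if and only if it is $\{K_3,C_4,P_4\}$-free, and the stars that embed as induced subgraphs of $K_{1,4}$ are precisely $K_{1,2},K_{1,3},K_{1,4}$. Thus any connected $S$ on at least three vertices with $S\notin\{P_3,K_{1,3},K_{1,4}\}$ must contain, as an induced subgraph, at least one of $K_3$, $C_4$, $P_4$, or $K_{1,5}$.

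It therefore suffices to exhibit, for each member $F\in\{K_3,C_4,P_4,K_{1,5}\}$, a bad graph $B_F$ that is $F$-free: since $F$ is then an induced subgraph of $S$ but not of $B_F$, the graph $B_F$ is $S$-free (a fortiori $S$-heavy) yet has a non-heavy longest cycle, defeating the implication for that $S$. Concretely I would build $B_{K_3}$ (and, if possible simultaneously, $B_{C_4}$) with girth at least $5$, take $B_{P_4}$ to be a suitable $2$-connected cograph, and arrange $B_{K_{1,5}}$ so that every neighbourhood has independence number at most $4$; each must contain an induced $K_{1,4}$, consistent with Theorem 4. The hard part will be the constructions themselves. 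The difficulty is that a heavy vertex has degree at least $n/2$, and insertion arguments show such a vertex is extremely reluctant to stay off a longest cycle: if $w$ is heavy, lies off a longest cycle $C$, and $N(w)\subseteq V(C)$, then $|N(w)\cap V(C)|=d(w)\ge n/2>|C|/2$ forces two neighbours of $w$ to be consecutive on $C$, whence $w$ can be inserted to lengthen $C$, a contradiction. Hence every counterexample must route the heavy vertex into an off-cycle blob attached through a $2$-cut $\{c_1,c_2\}$ whose longest internal path between $c_1$ and $c_2$ is strictly shorter than the corresponding arc of $C$, while still making $w$ heavy and respecting the prescribed forbidden subgraph. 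Threading this needle---attaining degree exactly at the threshold $n/2$ while provably admitting no longer cycle through the blob, and verifying $F$-freeness---is where essentially all the work lies; once the required graphs are in hand, the theorem follows by combining them with the sufficiency direction above.
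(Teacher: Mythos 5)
Your sufficiency argument and your reduction of the necessity are exactly the paper's: the same structural fact (any connected $S$ on at least $3$ vertices other than $P_3$, $K_{1,3}$, $K_{1,4}$ contains $K_3$, $C_4$, $P_4$ or $K_{1,5}$ as an induced subgraph, via your correct star characterization), followed by the same plan of exhibiting, for each such $F$, a $2$-connected $F$-free graph with a non-heavy longest cycle. But at that point you stop: no such graphs are actually constructed, and you concede that this is ``where essentially all the work lies.'' Since these counterexamples are the entire content of the necessity proof, this is a genuine gap, not a stylistic omission. The paper supplies three explicit families with delicately balanced parameters: $G_1$ ($K_3$-free, $r\geq 4$, $k\geq 2r+2$), $G_2$ (simultaneously $P_4$- and $C_4$-free, $r\geq 4$, $k\geq 2r-1$), and $G_3$ ($K_{1,5}$-free, $r\geq 11$, $(2r+2)/3\leq k\leq r-3$), in each of which a heavy vertex is kept off every longest cycle while its degree is pushed to the threshold $n/2$.

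Worse, one of your sketched plans is provably unworkable. You propose to build $B_{K_3}$ (and, ``if possible simultaneously,'' $B_{C_4}$) with girth at least $5$. But a $2$-connected graph of girth at least $5$ contains no heavy vertex at all: if $d(w)\geq n/2$, then $N(w)$ is independent (no triangle) and no two vertices of $N(w)$ share a neighbour other than $w$ (no $4$-cycle), so each of the at most $n-1-d(w)\leq n/2-1$ vertices outside $\{w\}\cup N(w)$ sends at most one edge into $N(w)$; since $|N(w)|\geq n/2>n/2-1$, some neighbour of $w$ has degree $1$, contradicting $2$-connectedness. Hence every longest cycle of such a graph is vacuously heavy, and no $F$-free bad graph of girth at least $5$ exists for $F=K_3$, let alone for $K_3$ and $C_4$ simultaneously. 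This is precisely why the paper's triangle-free example $G_1$ is riddled with induced $C_4$'s (the vertices $x$ and $y$ have $k$ common nonadjacent neighbours $z_1,\ldots,z_k$), and why $C_4$ is paired with $P_4$ in $G_2$ rather than with $K_3$. Your insertion heuristic about routing the heavy vertex through a $2$-cut is sound motivation, but without explicit graphs and the verification of $F$-freeness, $2$-connectedness, and a longest cycle missing a heavy vertex, the necessity direction remains unproven.
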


The sufficiency of this theorem follows from Theorem 4 immediately. We
will prove its necessity in Section 5.

\section{Preliminaries}

Let $C$ be a cycle with a given orientation and $x,y\in V(C)$. We
use $\overrightarrow{C}[x,y]$ to denote the path from $x$ to $y$ on
$\overrightarrow{C}$, and $\overleftarrow{C}[y,x]$ to denote the
path $\overrightarrow{C}[x,y]$ with the opposite direction.

Let $G$ be a graph on $n$ vertices and $k\geq 3$ be an integer. We
call a circular sequence of vertices $C=v_1v_2\cdots v_kv_1$ an
\emph{$o$-cycle} of $G$, if for all $i$ with $1\leq i\leq k$, either
$v_iv_{i+1}\in E(G)$ or $d(v_i)+d(v_{i+1})\geq n$, where
$v_{k+1}=v_1$. The \emph{deficit} of $C$ is defined by
$\de(C)=|\{i: v_iv_{i+1}\notin E(G)$ with $1\leq i\leq k\}|$. Thus a
cycle is an $o$-cycle with deficit 0.

Similarly, we can define \emph{$o$-paths} of $G$.

Now, we prove the following lemma on $o$-cycles.

\begin{lemma}
Let $G$ be a graph and $C$ be an $o$-cycle of $G$. Then there exists
a cycle of $G$ which contains all the vertices in $V(C)$.
\end{lemma}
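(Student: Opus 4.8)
The plan is to induct on the deficit $\de(C)$. If $\de(C)=0$, then every consecutive pair of $C=v_1v_2\cdots v_kv_1$ is an edge, so $C$ is itself a cycle of $G$ through $V(C)$ and we are done. For the inductive step I would build, from an $o$-cycle of positive deficit, a new $o$-cycle whose vertex set contains $V(C)$ but whose deficit is strictly smaller; the induction hypothesis (applied to the smaller deficit) then produces the desired cycle through $V(C)$.

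So assume $\de(C)\ge 1$ and, after relabelling and choosing an orientation, let $v_kv_1\notin E(G)$ be a non-edge of $C$, so that $d(v_k)+d(v_1)\ge n$ while $v_1,v_k$ are distinct and nonadjacent. Write $U=V(G)\setminus V(C)$, so $|U|=n-k$, and split according to whether $v_1$ and $v_k$ have a common neighbour off $C$. If some $w\in U$ lies in $N(v_1)\cap N(v_k)$, I insert $w$ at the non-edge, forming $C'=v_1v_2\cdots v_kwv_1$: every old consecutive pair survives, and the non-edge $v_kv_1$ is replaced by the genuine edges $v_kw$ and $wv_1$, so $C'$ is an $o$-cycle with $V(C')=V(C)\cup\{w\}\supseteq V(C)$ and $\de(C')=\de(C)-1$.

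Otherwise $N(v_1)\cap N(v_k)\cap U=\emptyset$, so the off-$C$ neighbourhoods of $v_1$ and $v_k$ are disjoint subsets of $U$. Writing $a,b$ for the numbers of neighbours of $v_1,v_k$ on $C$, this gives $(d(v_1)-a)+(d(v_k)-b)\le n-k$, hence $a+b\ge d(v_1)+d(v_k)-(n-k)\ge k$. Now a crossing argument goes through inside $C$: with $A=\{i:v_i\in N(v_1)\}$ and $B=\{i:v_i\in N(v_k)\}$, both the shifted set $A-1$ and the set $B$ are subsets of $\{1,\dots,k-1\}$ of total size $a+b\ge k>k-1$, so they meet in some index $j$ with $2\le j\le k-2$, $v_{j+1}\in N(v_1)$, and $v_j\in N(v_k)$. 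Rerouting to $C'=v_1v_{j+1}v_{j+2}\cdots v_kv_jv_{j-1}\cdots v_2v_1$ keeps all vertices of $C$ and replaces the pairs $v_kv_1$ (a non-edge) and $v_jv_{j+1}$ by the edges $v_1v_{j+1}$ and $v_kv_j$, so $C'$ is an $o$-cycle on $V(C)$ with $\de(C')\le\de(C)-1$. In both cases $\de(C')<\de(C)$ and $V(C')\supseteq V(C)$, and the induction hypothesis finishes the proof.

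I expect the only genuine obstacle to be the treatment of neighbours lying off $C$: the naive crossing argument uses only the degrees of $v_1,v_k$ within $C$, which the hypothesis $d(v_1)+d(v_k)\ge n$ does not control, since $V(C)$ may be a proper subset of $V(G)$. The role of the dichotomy above is precisely to resolve this — an off-$C$ common neighbour is absorbed by insertion, while the \emph{absence} of such a neighbour forces enough on-$C$ neighbours ($a+b\ge k$) for the crossing to succeed. Once this is in place, checking that each operation yields a valid $o$-cycle and lowers the deficit by at least one is routine bookkeeping on the consecutive pairs.
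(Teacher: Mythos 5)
Your proof is correct and takes essentially the same approach as the paper's: the identical dichotomy between an off-cycle common neighbour of $v_1,v_k$ (absorbed by insertion) and the disjoint-off-cycle-neighbourhood case, where the count $a+b\geq k$ and the crossing reroute $v_1v_{j+1}\cdots v_kv_j\cdots v_2v_1$ lower the deficit. The only cosmetic difference is that you run an induction on the deficit while the paper chooses an $o$-cycle through $V(C)$ of minimum deficit and derives a contradiction; these are interchangeable formulations of the same argument.
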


\begin{proof}
Assume the opposite. Let $C'$ be an $o$-cycle which contains all the
vertices in $V(C)$ such that $\de(C')$ is as small as possible. Then
we have $\de(C')\geq 1$. Without loss of generality, we suppose that
$C'=v_1v_2\cdots v_kv_1$, where $v_1v_k\notin E(G)$ and
$d(v_1)+d(v_k)\geq n$. We use $P$ to denote the $o$-path
$P=v_1v_2\cdots v_k$.

If $v_1$ and $v_k$ have a common neighbor in $V(G)\backslash V(P)$,
denote it by $x$, then $C''=Pv_kxv_1$ is an $o$-cycle which contains
all the vertices in $V(C)$ with deficit smaller than
$\de(C')$, a contradiction.

So we assume that $N_{G-P}(v_1)\cap N_{G-P}(v_k)=\emptyset$. Then we
have $d_P(v_1)+d_P(v_k)\geq |V(P)|$ by $d(v_1)+d(v_k)\geq n$. Thus,
there exists $i$ with $2\leq i\leq k-1$ such that $v_i\in N_P(v_1)$
and $v_{i-1}\in N_P(v_k)$, and then
$C''=P[v_1,v_{i-1}]v_{i-1}v_kP[v_k,v_i]v_iv_1$ is an $o$-cycle which
contains all the vertices in $V(C)$ with deficit smaller than
$\de(C')$, a contradiction.
\end{proof}

Note that Theorem 1 can be induced by Lemma 1.

Let $P$ be an $(x,y)$-path (or $o$-path) of $G$. If the number of
vertices of $P$ is more than that of a longest cycle of $G$, then,
by Lemma 2, we have $xy\notin E(G)$ and $d(x)+d(y)<n$.

In the following, we use $\overline{E}(G)$ to denote the set $\{uv:
uv\in E(G)$ or $d(u)+d(v)\geq n\}$.

\section{Proof of Theorem 2}

If $G$ contains at least three heavy vertices, then let
$X=\{x_1,x_2,\ldots,x_k\}$ be the set of heavy vertices of $G$,
where $k\geq 3$. Thus $C=x_1x_2\cdots x_kx_1$ is an $o$-cycle. By
Lemma 1, there exists a cycle containing all the vertices in $X$,
which is a heavy cycle, a contradiction. Thus we have that $G$
contains at most two heavy vertices.

\begin{case}
$G$ contains no heavy vertices.
\end{case}

If $G$ contains a cycle $C$, then $C$ is a heavy cycle of $G$, a
contradiction. Since $G$ is connected, we have that $G$ is a tree.

\begin{case}
$G$ contains only one heavy vertex.
\end{case}

Let $x$ be the heavy vertex and $H$ be a components of $G-x$. Since
$G$ is connected, we have that $N_H(x)\neq\emptyset$. If
$|N_H(x)|\geq 2$, then let $x_1$ and $x_2$ be two vertices in
$N_H(x)$, and $P$ be an $(x_1,x_2)$-path in $H$. Then $C=Px_2xx_1$
is a cycle containing $x$, which is a heavy cycle, a contradiction.
Thus we have $|N_H(x)|=1$.

Since $d(x)\geq n/2$, we have that $G-x$ contains at least $n/2$
components.

\begin{case}
$G$ contains exactly two heavy vertices.
\end{case}

Let $x$ and $y$ be the two heavy vertices and $P$ be a longest
$(x,y)$-path of $G$. If $|V(P)|\geq 3$, then $C=Pyx$ is an $o$-cycle
of $G$. By Lemma 1, there exists a cycle containing all the vertices
in $V(C)$, which is a heavy cycle, a contradiction. Thus we have
that $|V(P)|=2$, which implies that $xy\in E(G)$ and $xy$ is a cut
edge of $G$.

Let $H_x$ and $H_y$ be the components of $G-xy$ which contains $x$
and $y$, respectively. Since $d(x)\geq n/2$ and $xy'\notin E(G)$ for
all $y'\in V(H_y)\backslash\{y\}$, we have that $|V(H_y)|\leq n/2$.
Similarly we have that $|V(H_x)|\leq n/2$. This implies that $n$ is
even and $|V(H_x)|=|V(H_y)|=n/2$.

By $d(x)\geq n/2$ and $|V(H_x)|=n/2$, we have that $xx'\in E(G)$
for every $x'\in V(H_x)\backslash\{x\}$. Similarly, we have that
$yy'\in E(G)$ for every $y'\in V(H_y)\backslash\{y\}$.

The proof is complete.\hfill$\Box$

\section{Proof of Theorem 4}

We use $n$ to denote the order of $G$, and $c$ the length of $C$. We
give an orientation to $C$. Let $x$ be a vertex in $V(G-C)$, we
prove that $d(x)<n/2$.

Let $H$ be the component of $G-C$ which contains $x$. Then all the
neighbors of $x$ is in $V(C)\cup V(H)$. Let $h=|V(H)|$. Note that
$x$ is not a neighbor of itself, we have $d_H(x)<h$.

\begin{claim}
If $v_1,v_2$ are two vertices in $V(C)$ such that $v_1v_2\in E(C)$,
then either $xv_1\notin E(G)$ or $xv_2\notin E(G)$.
\end{claim}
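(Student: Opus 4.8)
The plan is to argue by contradiction, exploiting the maximality of $C$. Suppose, contrary to the claim, that both $xv_1\in E(G)$ and $xv_2\in E(G)$. Since $v_1v_2\in E(C)$, the two vertices are consecutive along the oriented cycle, and without loss of generality $v_2$ is the successor of $v_1$ on $\overrightarrow{C}$.

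The key step is to construct a strictly longer cycle by routing through $x$. I would let $C'$ be the cycle obtained from $C$ by deleting the edge $v_1v_2$ and inserting the two edges $v_1x$ and $xv_2$; equivalently, $C'$ follows $\overrightarrow{C}[v_2,v_1]$ and then returns from $v_1$ to $v_2$ via the detour $v_1\,x\,v_2$. Because $x\in V(G-C)$ lies outside $V(C)$, no vertex is repeated along this closed walk, so $C'$ is a genuine cycle with $|V(C')|=|V(C)|+1=c+1$.

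This contradicts the choice of $C$ as a longest cycle of $G$, since $C'$ is longer. Therefore at most one of $xv_1,xv_2$ can be an edge of $G$, which is exactly the assertion of the claim.

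I do not anticipate any substantive obstacle here: this is the classical observation that a longest cycle cannot be extended by inserting an external vertex between two of its consecutive vertices. The only point meriting a line of verification is that $C'$ is a simple cycle, and this is ensured precisely by $x\notin V(C)$. Note that the $K_{1,4}$-heavy and $2$-connectivity hypotheses of Theorem 4 play no role in this particular claim; they will instead be needed for the degree estimates that follow, so Claim 1 should be regarded as a purely structural consequence of $C$ being longest.
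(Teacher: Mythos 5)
Your proof is correct and is exactly the paper's argument: the paper disposes of this claim in one line by noting that $C-v_1v_2\cup v_1xv_2$ would be a cycle longer than $C$, which is the same detour construction you describe. Your added remark that simplicity of the new cycle follows from $x\notin V(C)$, and that the heaviness and connectivity hypotheses are not used here, is accurate.
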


\begin{proof}
Otherwise, $C-v_1v_2\cup v_1xv_2$ is a cycle longer than $C$, a
contradiction.
\end{proof}

By Claim 1, we have that if $P$ is a subpath of $C$, then
$d_P(x)\leq\lceil |V(P)|/2\rceil$.

By the 2-connectedness of $G$, there exists a $(u_0,v_0)$-path (and
then, a $(u_0,v_0)$-$o$-path) passing though $x$ which is internally
disjoint with $C$, where $u_0,v_0\in V(C)$. We choose an $o$-path
$Q=x_{-k}x_{-k+1}\cdots x_{-1}xx_1\cdots x_l$ such that\\
(1) $x_{\pm 1}\in N(x)$, and\\
(2) $|V(Q)\cap N_H(x)|$ is as large as possible,\\
where $x_{-k}\in V(C)$ and $x_l\in V(C)$.

\begin{claim}
$Q$ contains at least half of the vertices in $N_H(x)$.
\end{claim}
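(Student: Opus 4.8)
The plan is to argue by contradiction from the maximality condition (2) in the choice of $Q$. Suppose $Q$ contains fewer than half of $N_H(x)$, and write $A=V(Q)\cap N_H(x)$ for the captured neighbors and $B=N_H(x)\setminus V(Q)$ for the missed ones, so that $|B|>|A|$. My aim is to produce an $o$-path with the same two properties (endpoints on $C$ and $x_{\pm 1}\in N(x)$) that captures strictly more neighbors of $x$, contradicting (2). I expect to capture in fact almost all of $N_H(x)$, so the threshold $1/2$ should come out with room to spare.

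First I would extract local constraints from maximality. If some $y\in B$ could be inserted between two consecutive vertices $x_i,x_{i+1}$ of $Q$, that is, if $x_iy\in\overline{E}(G)$ and $yx_{i+1}\in\overline{E}(G)$, then replacing the edge $x_ix_{i+1}$ by $x_iyx_{i+1}$ yields an $o$-path on the same endpoints, still satisfying (1), but capturing one more neighbor of $x$, contradicting (2). Hence for every $y\in B$ the vertices $u\in V(Q)$ with $uy\in\overline{E}(G)$ contain no two consecutive vertices of $Q$; in particular, since $xy\in E(G)$, the two path-neighbors $x_{-1},x_1$ of $x$ satisfy $x_{-1}y\notin\overline{E}(G)$ and $x_1y\notin\overline{E}(G)$, so $x_{\pm 1}$ and $y$ are nonadjacent and $d(x_{\pm 1})+d(y)<n$.

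Next I would bring in the $K_{1,4}$-heavy hypothesis, which is what supplies the $\overline{E}(G)$-adjacencies needed to reroute $Q$. Any four pairwise nonadjacent vertices of $N_H(x)$, together with $x$, induce a $K_{1,4}$, so two of them have degree sum at least $n$ and hence lie in $\overline{E}(G)$; one checks in particular that the \emph{light} neighbors of $x$ (those of degree less than $n/2$) cannot contain four pairwise nonadjacent vertices. Meanwhile any two neighbors of degree at least $n/2$ have degree sum at least $n$, so lie in $\overline{E}(G)$, and adjacent neighbors of $x$ already lie in $\overline{E}(G)$. These facts let me string together long runs of neighbors as $o$-path segments, jumping across different pieces of $H-x$ by degree-sum jumps even when the neighbors are not adjacent in $G$, so that only a bounded number of neighbors can fail to be chainable.

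The main obstacle is assembling these fragments into a single $o$-path that respects all of the constraints at once. The path must pass through $x$, so $x$ splits it into only two branches, limiting how many separate components of $H-x$ a single branch can sweep out; it must begin and end on $C$; and condition (1) pins down the two path-neighbors of $x$. The delicate point is connecting neighbors lying in different components of $H-x$, which requires either a degree-sum jump (available between heavy neighbors via the $K_{1,4}$-heavy condition) or an exit to $C$, and then verifying that each such connection can be arranged so that the captured count strictly exceeds $|A|$ — equivalently, exhibiting an injection $B\hookrightarrow A$ in which no captured neighbor is charged twice. It is precisely in carrying out this rerouting, and in checking the remaining small cases where $|N_H(x)|$ is tiny, that the bound $|A|\ge\tfrac12|N_H(x)|$ must be confirmed.
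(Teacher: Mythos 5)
Your opening moves coincide with the paper's: argue by contradiction against the maximality condition (2), and use the insertion trick to show that every missed neighbor $x'\in N_H(x)\setminus V(Q)$ satisfies $x'x_1\notin\overline{E}(G)$ and $x'x_{-1}\notin\overline{E}(G)$ (the paper's Claim 2.1). But from there your application of the $K_{1,4}$-heavy hypothesis is too weak to carry the proof. You apply it to stars whose four leaves all lie in $N_H(x)$, concluding only that the light neighbors of $x$ contain no four pairwise nonadjacent vertices — an independence-number bound that still leaves $N_H(x)\setminus V(Q)$ fragmented into pieces you cannot chain. The paper's decisive step is different: take the star centered at $x$ with leaves $x_{-1}$, $x_1$ and \emph{two missed neighbors} $x'_i,x'_j$. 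By Claim 2.1 the four cross pairs are not in $\overline{E}(G)$, so if in addition $x_{-1}x_1\notin\overline{E}(G)$, the $K_{1,4}$-heavy condition forces $x'_ix'_j\in\overline{E}(G)$ for \emph{every} pair of missed neighbors. Then a single $o$-path from $x$ through the entire missed set exists, 2-connectedness supplies an escape path from that set to $V(C)\cup V(Q)$ avoiding $x$, and concatenation yields an $o$-path capturing at least half of $N_H(x)$, a contradiction. The upshot is the dichotomy $x_{-1}x_1\in\overline{E}(G)$, which your plan never reaches and which is used again later.

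The assembly problem you candidly flag as "the main obstacle" is precisely where your proposal stops, and the paper's resolution is not the injection $B\hookrightarrow A$ you suggest. With $x_{-1}x_1\in\overline{E}(G)$ in hand, the paper chooses a \emph{second} extremal $o$-path $R=xx'_1\cdots x'_r$, internally disjoint from $C\cup Q$, maximizing $|V(R)\cap(N_H(x)\setminus V(Q))|$. Repeating the star argument with leaves $x_1,x'_1,x''_i,x''_j$ shows $R$ captures at least half of the missed neighbors, hence (under the supposition) at least a quarter of $N_H(x)$; and $R$ must terminate on $V(C)\setminus\{x_{-k},x_l\}$, because if it lands on $Q$ one can reroute across the $x_{-1}x_1$ degree-sum jump to capture strictly more neighbors. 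The final contradiction is a quarter/quarter count: if either branch $Q[x_{-k},x]$ or $Q[x,x_l]$ carried fewer than a quarter of $N_H(x)$, swapping that branch for $R$ would beat $Q$; so both carry a quarter and $Q$ carries at least half, contradicting the supposition. (Note also that your expectation of capturing "almost all" of $N_H(x)$ with the threshold $1/2$ coming out "with room to spare" is misplaced — the extremal structure genuinely delivers only quarters per branch.) Since the two pillars of the argument — the star centered at $x$ including $x_{\pm1}$ as leaves, and the second extremal path with the landing-point analysis — are absent, the proposal has a genuine gap at its core.
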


\begin{proof}
If $d_H(x)=0$, then the assertion is obvious. So we assume that
$d_H(x)\geq 1$.

Suppose that $|N_H(x)\cap V(Q)|<d_H(x)/2$. Then $|N_H(x)\backslash
V(Q)|\geq \lceil d_H(x)/2\rceil\geq 1$.

\begin{subclaim}
For every $x'\in N_H(x)\backslash V(Q)$, $x'x_1\notin
\overline{E}(G)$ and $x'x_{-1}\notin \overline{E}(G)$.
\end{subclaim}

\begin{proof}
If $x'x_1\in \overline{E}(G)$, then $Q'=Q[x_{-k},x]xx'x_1Q[x_1,x_l]$
is an $o$-path which contains more vertices in $N_H(x)$ than $Q$, a
contradiction. Thus we have $x'x_1\notin \overline{E}(G)$.

The second assertion can be proved similarly.
\end{proof}

\begin{subclaim}
$x_{-1}x_1\in \overline{E}(G)$.
\end{subclaim}

\begin{proof}
Suppose that $x_{-1}x_1\notin \overline{E}(G)$. Let $x'_i,x'_j$ be
any pair of vertices in $N_H(x)\backslash V(Q)$. By Claim 2.1, we
have that $x'_ix_{\pm 1}\notin \overline{E}(G)$ and $x'_jx_{\pm
1}\notin \overline{E}(G)$. Since $G$ is a $K_{1,4}$-heavy graph, we
have that $x'_ix'_j\in \overline{E}(G)$.

By the 2-connectedness of $G$, there is a path from
$N_H(x)\backslash V(Q)$ to $V(C)\cup V(Q)$ not passing through $x$.
Let $R'=y_1y_2\cdots y_r$ be such a path, where $y_1\in
N_H(x)\backslash V(Q)$ and $y_r\in V(C)\cup V(Q)\backslash\{x\}$.
Let $R$ be an $o$-path from $x$ to $y_1$ passing through all the
vertices in $N_H(x)\backslash V(Q)$.

If $y_r\in V(C)\backslash\{x_{-k},x_l\}$, then $Q'=Q[x_{-k},x]RR'$
is an $o$-path which contains at least half of the vertices in
$N_H(x)$, a contradiction.

If $y_r\in V(Q[x_1,x_l])$, then $Q'=Q[x_{-k},x]RR'Q[y_r,x_l]$ is an
$o$-path which contains at least half of the vertices in $N_H(x)$, a
contradiction.

If $y_r\in V(Q[x_{-k},x_{-1}])$, then we can prove the result
similarly.

Thus the claim holds.
\end{proof}

Now, we choose an $o$-path $R=xx'_1x'_2\cdots x'_r$ which is
internally disjoint with $C\cup Q$, where $x'_r\in V(C)\cup
V(Q)\backslash\{x\}$
such that\\
(1) $x'_1\in N(x)$, and\\
(2) $|V(R)\cap(N_H(x)\backslash V(Q))|$ is as large as possible.

\begin{subclaim}
$R$ contains at least half of the vertices in $N_H(x)\backslash
V(Q)$.
\end{subclaim}

\begin{proof}
Note that $d_{H-Q}(x)\geq 1$. It is easy to know that $x'_1\in
N_H(x)\backslash V(Q)$. By Claim 2.1, we have that
$x'_1x_1\notin\overline{E}(G)$.

Suppose that $|V(R)\cap(N_H(x)\backslash V(Q))|<d_{H-Q}(x)/2$. Let
$N_H(x)\backslash V(Q)\backslash V(R)=\{x''_1,x''_2,\ldots,x''_s\}$,
where $s\geq \lceil d_{H-Q}(x)/2\rceil$.

For every vertex $x''_i\in N_H(x)\backslash V(Q)\backslash V(R)$, by
Claim 2.1, we have that $x''_ix_1\notin\overline{E}(G)$. Similarly,
we can prove that $x''_ix'_1\notin\overline{E}(G)$.

For any pair of vertices $x''_i,x''_j\in N_H(x)\backslash
V(Q)\backslash V(R)$, we have that $x''_ix_1\notin \overline{E}(G)$,
$x''_ix'_1\notin \overline{E}(G)$, $x''_jx_1\notin \overline{E}(G)$,
$x''_jx'_1\notin \overline{E}(G)$ and
$x'_1x_1\notin\overline{E}(G)$. Since $G$ is $K_{1,4}$-heavy, we
have that $x''_ix''_j\in \overline{E}(G)$.

By the 2-connectedness of $G$, there is a path from
$N_H(x)\backslash V(Q)\backslash V(R)$ to $V(C)\cup V(Q)$ not
passing through $x$. Let $T'=y_1y_2\ldots y_t$ be such a path, where
$y_1\in N_H(x)\backslash V(Q)\backslash V(R)$ and $y_t\in V(C)\cup
V(Q)\backslash\{x\}$. Let $T$ be an $o$-path from $x$ to $y_1$
passing through all the vertices in $N_H(x)\backslash V(Q)\backslash
V(R)$. Then $R'=TT'$ is an $o$-path from $x$ to $V(C)\cup
V(Q)\backslash\{x\}$ which contains at least half of the vertices in
$N_H(x)\backslash V(Q)$, a contradiction.
\end{proof}

By Claim 2.3, we have that $R$ contains at least one fourth of the
vertices in $N_H(x)$.

\begin{subclaim}
$x'_r\in V(C)\backslash\{x_{-k},x_l\}$.
\end{subclaim}

\begin{proof}
Assume the opposite. Without loss of generality, we assume that
$x'_r\in[x_1,x_l]$.

If $x'_r=x_1$, then $Q'=Q[x_{-k},x]RQ[x_1,x_l]$ is an $o$-path which
contains more vertices in $N_H(x)$ than $Q$, a contradiction.

If $x'_r=x_i$, where $2\leq i\leq l$, then let $x_j$ be the last
vertex in $[x_1,x_{i-1}]$ such that $x_j\in N(x)$. Then
$Q'=Q[x_{-k},x_{-1}]x_{-1}x_1Q[x_1,x_j]x_jxRQ[x'_r,x_l]$ is an
$o$-path which contains more vertices in $N_H(x)$ than $Q$, a
contradiction.

Thus we have $x'_r\in V(C)\backslash\{x_{-k},x_l\}$.
\end{proof}

If $Q[x,x_l]$ contains fewer than one fourth of the vertices in
$N_H(x)$, then $Q'=Q[x_{-k},x]R$ is an $o$-path which contains more
vertices in $N_H(x)$ than $Q$, a contradiction. This implies that
$Q[x,x_l]$ contains at least one fourth of the vertices in $N_H(x)$.
Similarly, we have that $Q[x_{-k},x]$ contains at least one fourth
of the vertices in $N_H(x)$. Thus $Q$ contains at least half of the
vertices in $N_H(x)$, a contradiction.
\end{proof}

By Claim 2, we have that $k+l-2\geq d_H(x)/2$.

Let $u_0=x_{-k}\in V(C)$ and $v_0=x_l\in V(C)$. We assume that the
length of $\overrightarrow{C}[v_0,u_0]$ is $r_1+1$ and length of
$\overrightarrow{C}[u_0,v_0]$ is $r_2+1$, where $r_1+r_2+2=c$. We
use $\overrightarrow{C}=v_0v_1v_2\cdots
v_{r_1}u_0v_{-r_2}v_{-r_2+1}\cdots v_{-1}v_0$ to denote $C$ with the
given orientation, and $\overleftarrow{C}=u_0u_1u_2\cdots
u_{r_1}v_0u_{-r_2}u_{-r_2+1}$ $\cdots u_{-1}u_0$ to denote $C$ with
the opposite direction, where $v_i=u_{r_1+1-i}$ and
$v_{-j}=u_{-r_2-1+j}$.

\begin{claim}
$r_1\geq k+l-1$, and for every vertex $v_s\in[v_1,v_l]$, $xv_s\notin
E(G)$, and for every vertex $u_t\in[u_1,u_k]$, $xu_t\notin E(G)$.
\end{claim}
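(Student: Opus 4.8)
Let me parse what Claim 3 is asserting. We have a longest cycle $C$, a vertex $x$ in some component $H$ of $G-C$, and we've constructed the $o$-path $Q = x_{-k}\cdots x_{-1} x x_1 \cdots x_l$ with endpoints $u_0 = x_{-k}$ and $v_0 = x_l$ on $C$. The cycle $C$ has been oriented two ways, with $v_0 v_1 \cdots v_{r_1} u_0$ being the $\overrightarrow{C}[v_0, u_0]$ arc (length $r_1+1$) and $u_0 u_1 \cdots u_{r_1} v_0$ the reverse arc. So the vertices $v_1, \ldots, v_{r_1}$ are the interior of the $v_0$-to-$u_0$ arc read one way, and $u_1, \ldots, u_{r_1}$ the same interior read the other way. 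The claim is: $r_1 \geq k + l - 1$, and $x$ is nonadjacent to each of $v_1, \ldots, v_l$ and to each of $u_1, \ldots, u_k$.

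**The core idea: insertion/rotation forces a long gap.** The plan is to use the now-standard ``crossing/insertion'' contradiction from Claim 1, but applied to the $o$-path $Q$ rather than a single chord. The point is that $Q$ is an $o$-path from $v_0$ to $u_0$ using $k+l-1$ edges internally (it has $k+l+1$ vertices, hence $k+l$ edges, but its two endpoints sit on $C$), and it passes through $x$ together with many neighbors of $x$ in $H$. First I would argue $r_1 \geq k+l-1$: the arc $\overrightarrow{C}[v_0,u_0]$ together with the reversed $o$-path $Q$ forms an $o$-cycle, and if this arc were too short, splicing $Q$ in place of the short arc would produce (via Lemma~1) a cycle containing all of $V(C)$ plus the interior vertices of $Q$, i.e.\ a cycle strictly longer than $C$, contradicting maximality of $C$. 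So the length $r_1+1$ of that arc must be at least $|V(Q)| - 1 = k+l$, giving $r_1 \geq k+l-1$.

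**The nonadjacency statements.** For the second part, suppose toward contradiction that $x v_s \in E(G)$ for some $v_s$ with $1 \leq s \leq l$. The idea is that $v_s$ lies close to the endpoint $v_0 = x_l$ along $C$, and since $x$ already reaches out to $x_1, \ldots, x_l$ along $Q$, having an extra chord $x v_s$ lets us reroute: replace the short sub-arc of $C$ between $v_0$ and $v_s$ by the detour $v_0 \cdots x \cdots v_s$ (going back along part of $Q$ to $x$, then the edge $x v_s$), thereby absorbing the interior vertices $x_1, \ldots, x_{s}$ (or the relevant neighbors of $x$ in $H$) into a longer cycle. The symmetric argument with the reversed orientation handles $x u_t$ for $1 \leq t \leq k$. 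In each case the conclusion is a cycle (an $o$-cycle, then invoke Lemma~1) that contains all of $V(C)$ and at least one additional vertex of $V(Q)\setminus V(C)$, contradicting that $C$ is longest.

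**Anticipated main obstacle.** The delicate point is making the rerouting precise so that the resulting closed walk is genuinely an $o$-cycle of strictly larger vertex set than $V(C)$ — one must verify that the inserted $o$-path segment and the retained arc of $C$ meet the $o$-cycle condition at every junction, and that no vertex of $C$ is dropped in the process. In particular, when I detour from $v_s$ back to $x$ and out along $Q$, I have to be careful about which portion of $Q$ gets reused versus which gets freshly traversed, so that the interior neighbors of $x$ are each counted once and the two endpoints $u_0, v_0$ remain on the cycle. I expect the cleanest writeup fixes an orientation, treats the index $s$ (resp.\ $t$) as the position along $C$, and exhibits the explicit $o$-cycle
\[
\overrightarrow{C}[v_s, u_0]\, Q[x_{-k}, x]\, x\, v_s,
\]
or a minor variant, then checks that it properly contains $V(C) \cup \{x\}$. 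Verifying the $o$-cycle edge-conditions at the two splice points is routine given Claim~1 and the definition of $Q$, but it is the step most prone to an off-by-one error in the indexing of the $v_i$'s and $u_j$'s.
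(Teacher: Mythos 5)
Your overall strategy is exactly the paper's: splice (part of) the $o$-path $Q$ into $C$ to form an $o$-cycle, then invoke Lemma~1 against the maximality of $C$. Your first half is essentially correct: replacing the arc $\overrightarrow{C}[v_0,u_0]$ by $Q$ and comparing lengths gives $r_1+1\geq k+l$, i.e.\ $r_1\geq k+l-1$, just as in the paper (your parenthetical claim that the resulting cycle ``contains all of $V(C)$'' is false --- the interior $v_1,\dots,v_{r_1}$ of the replaced arc is discarded --- but harmless, since your length count is the operative step). The genuine problem is in the nonadjacency part. The explicit $o$-cycle you exhibit, $\overrightarrow{C}[v_s,u_0]\,Q[x_{-k},x]\,x v_s$, is wrong, and not by an off-by-one: it keeps only the sub-arc from $v_s$ forward to $u_0$ and returns through the $u_0$-half of $Q$, thereby discarding $v_0$ and the entire opposite arc $v_{-r_2},\dots,v_{-1}$. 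Its vertex count is $r_1-s+k+2$, versus $c=r_1+r_2+2$, so it exceeds $c$ only if $k>r_2+s$, which you cannot guarantee (by the symmetric Claim~4, $r_2\geq k+l-1$, so it in fact never holds). The correct splice, as in the paper, uses the other arc and the other half of $Q$: $Pc=\overrightarrow{C}[v_s,v_0]\,Q[v_0,x]\,xv_s$, going the long way around $C$ from $v_s$ through $u_0$ to $v_0=x_l$, then down $Q$ to $x$, then the assumed edge $xv_s$.

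Relatedly, your stated verification target --- that the new $o$-cycle ``properly contains $V(C)\cup\{x\}$'' --- is unattainable for $s\geq 2$ and masks the missing step: any such reroute necessarily drops $v_1,\dots,v_{s-1}$. The actual argument is a count: $Pc$ contains $V(C)\setminus\{v_1,\dots,v_{s-1}\}$ together with $x,x_1,\dots,x_{l-1}$, so $|V(Pc)|\geq c-(s-1)+l\geq c+1$, and the hypothesis $s\leq l$ (i.e.\ $v_s\in[v_1,v_l]$) is used precisely here. Your writeup gestures at ``$v_s$ lies close to $v_0$'' but never performs this count, and without it the contradiction does not follow. With the corrected $o$-cycle and this count (the case $xu_t$, $t\leq k$, then being symmetric, as you say), your proof coincides with the paper's.
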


\begin{proof}
Note that $Q$ contains $k+l-1$ vertices in $V(H)$. If $r_1<k+l-1$,
then $Pc=Q\overleftarrow{C}[v_0,u_0]$ is an $o$-cycle longer than
$C$. By Lemma 1, there exists a cycle which contains all the
vertices in $V(Pc)$, a contradiction. Thus, we have $r_1\geq k+l-1$.

If $xv_s\in E(G)$, where $v_s\in[v_1,v_l]$, then
$Pc=\overrightarrow{C}[v_s,v_0]Q[v_0,x]xv_s$ is an $o$-cycle which
contains all the vertices in $V(C)\backslash[v_1,v_{s-1}]\cup
V(Q[x,x_{l-1}])$, and $|V(Pc)|>c$, a contradiction.

If $xu_t\in E(G)$, where $u_t\in[u_1,u_k]$, then we can prove the
result similarly.
\end{proof}

Similarly, we can prove the following claim.

\begin{claim}
$r_2\geq k+l-1$, and for every vertex $v_{-s}\in[v_{-l},v_{-1}]$,
$xv_{-s}\notin E(G)$, and for every vertex $u_{-t}\in[u_{-k},u_{-1}]$,
$xu_{-t}\notin E(G)$.
\end{claim}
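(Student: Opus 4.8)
The plan is to mirror the proof of Claim 3, since Claim 4 is exactly its reflection, obtained by reversing the orientation of $C$ and replacing the positive indices by negative ones. Concretely, the two arcs of $C$ between $u_0$ and $v_0$ are $\overrightarrow{C}[v_0,u_0]$ (of length $r_1+1$, with interior vertices $v_1,\dots,v_{r_1}$) and $\overrightarrow{C}[u_0,v_0]$ (of length $r_2+1$, with interior vertices $v_{-r_2},\dots,v_{-1}$). The proof of Claim 3 established the forbidden-edge and length properties along the \emph{first} arc and, to do so, closed $Q$ with the \emph{other} arc; for Claim 4 I would establish the symmetric properties along the second arc by instead closing $Q$ with the first arc.

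First I would prove $r_2\ge k+l-1$. Assuming $r_2<k+l-1$, I form the $o$-cycle $Pc=Q\overrightarrow{C}[v_0,u_0]$, i.e.\ I run $Q$ from $u_0$ through $x$ to $v_0$ and close up along the arc of length $r_1+1$. Since $Q$ contributes its $k+l-1$ interior vertices in $V(H)$, the $o$-cycle $Pc$ has $k+l+r_1+1$ vertices, which exceeds $c=r_1+r_2+2$ exactly when $r_2<k+l-1$. By Lemma 1 this yields a cycle longer than $C$, a contradiction, so $r_2\ge k+l-1$. This bound is what guarantees that the vertices $v_{-1},\dots,v_{-l}$ and $u_{-1},\dots,u_{-k}$ all lie in the interior of the second arc and are pairwise distinct, so I would record it before turning to the edge statements.

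Next, for $v_{-s}\in[v_{-l},v_{-1}]$ I would show $xv_{-s}\notin E(G)$ by the reflected substitution. If $xv_{-s}\in E(G)$, then $Pc=\overleftarrow{C}[v_{-s},v_0]\,Q[v_0,x]\,xv_{-s}$ is an $o$-cycle: the arc $\overleftarrow{C}[v_{-s},v_0]$ travels the long way from $v_{-s}$ to $v_0$ through $u_0$, omitting only the $s-1$ vertices $v_{-s+1},\dots,v_{-1}$, while $Q[v_0,x]$ reinserts the $l$ vertices $x,x_1,\dots,x_{l-1}$ of $H$. Hence $|V(Pc)|=(c-s+1)+l>c$ whenever $s\le l$, contradicting the maximality of $C$ via Lemma 1. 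The statement $xu_{-t}\notin E(G)$ for $u_{-t}\in[u_{-k},u_{-1}]$ is handled the same way, now closing through $Q[u_0,x]$, which reinserts the $k$ vertices $x,x_{-1},\dots,x_{-k+1}$, so the gain is $k-t+1>0$ for $t\le k$.

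Since everything is routine, the only real care needed is orientation bookkeeping: one must close $Q$ with the arc \emph{complementary} to the one being bounded (the arc of length $r_1+1$ for the bound on $r_2$), and pick the correct long sub-arc from $v_{-s}$ (resp.\ $u_{-t}$) so that the omitted piece is exactly the short segment back to $v_0$ (resp.\ $u_0$). The expected main obstacle is purely this sign/orientation accounting; no new idea beyond Claim 3 should be required, which is why the authors can simply say ``similarly.''
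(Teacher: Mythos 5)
Your proposal is correct and takes exactly the approach the paper intends: the paper proves Claim~4 only by the remark ``Similarly, we can prove the following claim,'' and your argument is precisely the mirror of the proof of Claim~3, with the right arc choices (closing $Q$ with $\overrightarrow{C}[v_0,u_0]$ for the bound $r_2\geq k+l-1$) and correct vertex counts ($k+l+r_1+1>c$ iff $r_2<k+l-1$, and gains $l-s+1$ and $k-t+1$ in the two edge cases).
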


Let $d_1=d_{\overrightarrow{C}[v_1,u_1]}(x)$ and
$d_2=d_{\overleftarrow{C}[v_{-1},u_{-1}]}(x)$. Then $d_C(x)\leq
d_1+d_2+2$.

\begin{claim}
$d_1\leq (r_1-(k+l)+1)/2$, $d_2\leq (r_2-(k+l)+1)/2$.
\end{claim}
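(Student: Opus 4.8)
The plan is to prove the two displayed inequalities symmetrically; I will describe the bound on $d_1$ in full, the argument for $d_2$ being identical after replacing $\overrightarrow{C}$ by $\overleftarrow{C}$ and Claim 3 by Claim 4. The whole thing is a counting argument that combines two facts already in hand: the consequence of Claim 1, noted just after its proof, that on any subpath $P$ of $C$ one has $d_P(x)\le\lceil |V(P)|/2\rceil$ (the neighbours of $x$ on $C$ being pairwise non-consecutive); and the forbidden end-segments supplied by Claim 3.

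First I would rewrite $d_1$ in the $v$-indexing. Since $v_i=u_{r_1+1-i}$ we have $u_1=v_{r_1}$, so $\overrightarrow{C}[v_1,u_1]=v_1v_2\cdots v_{r_1}$ is a subpath on exactly $r_1$ vertices. By Claim 3, $x$ is adjacent to none of $v_1,\dots,v_l$ (the segment $[v_1,v_l]$) and to none of $u_1,\dots,u_k$; translating the latter through $u_t=v_{r_1+1-t}$ shows these are precisely the last $k$ vertices $v_{r_1-k+1},\dots,v_{r_1}$ of the path. Hence every neighbour of $x$ on $\overrightarrow{C}[v_1,u_1]$ lies on the middle subpath $\overrightarrow{C}[v_{l+1},v_{r_1-k}]$.

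Next I would count. This middle subpath has $r_1-(k+l)$ vertices, using $r_1\ge k+l-1$ from Claim 3; in the degenerate case $r_1=k+l-1$ the two forbidden end-segments already exhaust $\overrightarrow{C}[v_1,u_1]$, leaving no middle vertex and forcing $d_1=0$, which meets the bound. Applying the ceiling estimate from Claim 1 to the middle subpath gives
\[
d_1\le\Big\lceil\tfrac{r_1-(k+l)}{2}\Big\rceil\le\tfrac{r_1-(k+l)+1}{2},
\]
which is exactly the asserted inequality. The bound on $d_2$ follows the same three steps: rewrite $\overleftarrow{C}[v_{-1},u_{-1}]=u_{-r_2}\cdots u_{-1}$ on $r_2$ vertices, use Claim 4 to forbid its first $l$ and last $k$ vertices (namely $[v_{-l},v_{-1}]$ and $[u_{-k},u_{-1}]$), and apply the same ceiling count with $r_1$ replaced by $r_2$.

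I do not expect a genuine obstacle: the content is bookkeeping. The only point demanding care is the index translation between the two orientations, confirming that the sets $[u_1,u_k]$ and $[u_{-k},u_{-1}]$ named in Claims 3 and 4 really correspond to the $k$ end-vertices of the relevant $v$-indexed subpaths, so that the two forbidden segments sit at opposite ends and the surviving middle has the stated length $r_i-(k+l)$. Getting the off-by-one right in the ceiling step and checking the boundary case $r_i=k+l-1$ are the only places an error could slip in.
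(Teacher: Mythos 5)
Your proof is correct and follows essentially the same route as the paper: the authors likewise dispose of the degenerate case $r_1=k+l-1$, use Claim 3 to restrict the neighbours of $x$ to the middle subpath (which they write as $\overrightarrow{C}[v_{l+1},u_{k+1}]$, identical to your $\overrightarrow{C}[v_{l+1},v_{r_1-k}]$ since $u_{k+1}=v_{r_1-k}$), and then apply the ceiling bound $d_1\leq\lceil(r_1-(k+l))/2\rceil\leq(r_1-(k+l)+1)/2$ from Claim 1, with the $d_2$ bound obtained symmetrically via Claim 4. Your index translations between the two orientations are all accurate, so nothing is missing.
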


\begin{proof}
If $r_1=k+l-1$, then by Claim 3, we have $d_1=0$. So we assume that
$r_1\geq k+l$.

By Claim 3, we have that
$d_1=d_{\overrightarrow{C}[v_{l+1},u_{k+1}]}(x)$. By Claim 1, we
have that $d_1\leq\lceil(r_1-(k+l))/2\rceil\leq(r_1-(k+l)+1)/2$.

The second assertion can be proved similarly.
\end{proof}

By Claim 5, we have that
$$d_C(x)\leq d_1+d_2+2\leq (r_1+r_2+2-2(k+l))/2+2=c/2-(k+l-2).$$
Note that $k+l-2\geq d_H(x)/2$, we have $d_C(x)\leq(c-d_H(x))/2$.
Thus $d(x)=d_C(x)+d_H(x)\leq(c+d_H(x))/2<(c+h)/2\leq n/2$.

The proof is complete.

\section{Proof of the necessity of Theorem 5}

Note that an $S$-free graph is also $S$-heavy, we only need to prove
that a longest cycle of a 2-connected $S$-free graph is not
necessarily a heavy cycle for $S\neq P_3,K_{1,3}$ and $K_{1,4}$.

First consider the following fact: if a connected graph $S$ on at
least 3 vertices is not $P_3, K_{1,3}$ or $K_{1,4}$, then $S$ must
contain $K_3,P_4,C_4$ or $K_{1,5}$ as an induced subgraph. Thus we
only need to show that not every longest cycle in a $K_3,P_4,C_4$ or
$K_{1,5}$-free graph is heavy.

We construct three graphs $G_1,G_2$ and $G_3$ (see Fig. 2).

\begin{center}
\begin{picture}(240,570)

\thicklines

\put(40,430){\put(80,80){\circle{120}} \put(140,80){\circle*{4}}
\put(20,80){\circle*{4}} \put(138,95.5){\circle*{4}}
\put(138,64.5){\circle*{4}} \put(22,95.5){\circle*{4}}
\put(22,64.5){\circle*{4}} \put(132,110){\circle*{4}}
\put(132,50){\circle*{4}} \put(28,110){\circle*{4}}
\put(28,50){\circle*{4}} \put(122.4,122.4){\circle*{4}}
\put(122.4,37.6){\circle*{4}} \put(145,77.5){$v$} \put(10,77.5){$u$}
\put(143,93){$v_1$} \put(143,62){$v_{-1}$} \put(9.5,93){$v_r$}
\put(4.5,62){$v_{-r}$} \put(137,107.5){$v_2$}
\put(137,47.5){$v_{-2}$}

\put(50,80){\circle*{4}} \put(110,80){\circle*{4}}
\put(80,40){\circle*{4}} \put(80,60){\circle*{4}}
\put(80,100){\circle*{4}} \put(80,120){\circle*{4}}
\put(20,80){\line(1,0){30}} \put(110,80){\line(1,0){30}}
\put(50,80){\line(3,-4){30}} \put(50,80){\line(3,-2){30}}
\put(50,80){\line(3,2){30}} \put(50,80){\line(3,4){30}}
\put(110,80){\line(-3,-4){30}} \put(110,80){\line(-3,-2){30}}
\put(110,80){\line(-3,2){30}} \put(110,80){\line(-3,4){30}}
\lbezier[4](80,60)(80,100) \put(40,82.5){$x$} \put(115,82.5){$y$}
\put(77.5,32.5){$z_1$} \put(77.5,52.5){$z_2$}
\put(71.5,105){$z_{k-1}$} \put(77.5,125){$z_k$}

\put(20,0){$G_1$ ($r\geq 4$ and $k\geq 2r+2$)}}

\put(20,240){\put(100,100){\circle*{4}} \put(117.3,110){\circle*{4}}
\put(117.3,90){\circle*{4}} \put(110,117.3){\circle*{4}}
\put(110,82.7){\circle*{4}} \put(90,117.3){\circle*{4}}
\put(90,82.7){\circle*{4}} \put(100,120){\circle*{4}}
\put(100,80){\circle*{4}} \lbezier(100,100)(117.3,110)
\lbezier(100,100)(117.3,90) \lbezier(90,82.7)(110,117.3)
\lbezier(90,117.3)(110,82.7) \put(100,80){\line(0,1){40}}
\lbezier[3](82.5,90)(82.5,110) \put(90,92.5){$x$}
\put(119.8,107.5){$z_1$} \put(119.8,87.5){$z_k$}
\put(112.5,113.8){$z_2$}

{\thinlines\put(100,40){\circle{40}}} \put(95,35){$K_r$}
{\thinlines\put(100,160){\circle{40}}} \put(95,155){$K_r$}

\put(20,100){\circle*{4}} \put(180,100){\circle*{4}}
{\thinlines\qbezier(20,100)(60,20)(100,20)
\qbezier(20,100)(60,60)(100,60) \qbezier(20,100)(60,80)(100,80)
\qbezier(20,100)(60,120)(100,120) \qbezier(20,100)(60,140)(100,140)
\qbezier(20,100)(60,180)(100,180) \qbezier(180,100)(140,20)(100,20)
\qbezier(180,100)(140,60)(100,60) \qbezier(180,100)(140,80)(100,80)
\qbezier(180,100)(140,120)(100,120)
\qbezier(180,100)(140,140)(100,140)
\qbezier(180,100)(140,180)(100,180)} \qbezier(20,100)(60,70)(100,70)
\qbezier(180,100)(140,70)(100,70) \put(20,100){\line(1,0){160}}
\put(10,97.5){$u$} \put(182.5,97.5){$v$}

\put(40,0){$G_2$ ($r\geq 4$ and $k\geq 2r-1$)}}

\put(0,10){\put(120,120){\circle{200}} \put(220,120){\circle*{4}}
\put(20,120){\circle*{4}} \put(216.6,145.9){\circle*{4}}
\put(216.6,94.1){\circle*{4}} \put(23.4,145.9){\circle*{4}}
\put(23.4,94.1){\circle*{4}} \put(206.6,170){\circle*{4}}
\put(206.6,70){\circle*{4}} \put(33.4,170){\circle*{4}}
\put(33.4,70){\circle*{4}} \put(190.7,190.7){\circle*{4}}
\put(190.7,49.3){\circle*{4}} \put(225,117.5){$v$}
\put(10,117.5){$u$} \put(221.6,143.4){$v_1$}
\put(221.6,91.6){$v_{-1}$} \put(9.5,143.4){$v_r$}
\put(4.5,91.6){$v_{-r}$} \put(211.6,167.5){$v_2$}
\put(211.6,67.5){$v_{-2}$}

\multiput(120,60)(0,60){3}{{\thinlines\put(0,0){\circle{40}}}
\put(-5,-5){$K_k$}}

\put(50,120){\circle*{4}} \put(190,120){\circle*{4}}
{\thinlines\qbezier(50,120)(90,40)(120,40)
\qbezier(50,120)(90,80)(120,80) \qbezier(50,120)(90,100)(120,100)
\qbezier(50,120)(90,140)(120,140) \qbezier(50,120)(90,160)(120,160)
\qbezier(50,120)(90,200)(120,200) \qbezier(190,120)(150,40)(120,40)
\qbezier(190,120)(150,80)(120,80)
\qbezier(190,120)(150,100)(120,100)
\qbezier(190,120)(150,140)(120,140)
\qbezier(190,120)(150,160)(120,160)
\qbezier(190,120)(150,200)(120,200)} \put(20,120){\line(1,0){30}}
\put(190,120){\line(1,0){30}} \put(40,122.5){$x$}
\put(195,122.5){$y$}

\put(30,0){$G_3$ ($r\geq 11$ and $(2r+2)/3\leq k\leq r-3$)}}

\end{picture}

\small Fig. 2. Graphs $G_1$, $G_2$ and $G_3$.
\end{center}

\begin{remark}
In graph $G_2$, the subgraph $G_2[\{x\}\cup[z_1,z_k]]$ is a star
$K_{1,k}$, and $u$ and $v$ are adjacent to all the vertices in the
$K_{1,k}$ and the two $K_r$'s (note that $uv\in E(G_2)$). In graph
$G_3$, $x$ and $y$ are adjacent to all the vertices in the three
$K_k$'s.

\end{remark}

Note that $G_1$ is $K_3$-free, $G_2$ is $P_4$ and $C_4$-free and
$G_3$ is $K_{1,5}$-free, and the longest cycles of the three graphs
are all not heavy. Thus the necessity of the theorem holds.

\end{document}